\newtheorem{thm}{Theorem}[section]
\newtheorem{lem}[thm]{Lemma}
\newtheorem{prop}[thm]{Proposition}
\newtheorem{mainthm}[thm]{Main Theorem}
\theoremstyle{definition}
\newtheorem{defin}[thm]{Definition}
\newtheorem{rem}[thm]{Remark}
\numberwithin{equation}{section}
\newcommand{\kkk}{\mathbf k}
\newcommand{\zC}{\mathbb C}
\newcommand{\zR}{\mathbb R}
\newcommand{\zN}{\mathbb N}
\newcommand{\zK}{\mathbb K}
\newcommand{\fU}{\mathfrak U}
\newcommand{\cL}{\mathcal L}
\begin{document}

%%%%% To ease editing, for IMPAN journals add:

\baselineskip=17pt

%%%%%%%%%%%

%% In the running head, replace first names by initials
%% and give an abbreviation of the title.

\title[On the norm of products of polynomials
on ultraproducts of Banach spaces]{On the norm of products of polynomials
on ultraproducts of Banach spaces}

\author[J. T. Rodr\'{i}guez]{Jorge Tom\'as Rodr\'{i}guez}
\address{IMAS-CONICET}
\email{jtrodrig@dm.uba.ar}
\date{}

\begin{abstract}

The purpose of this article is to study the problem of finding sharp lower bounds for the norm of the product of polynomials in the ultraproducts of Banach spaces $(X_i)_\fU$. We show that, under certain hypotheses, there is a strong relation between this problem and the same problem for the spaces $X_i$.
\end{abstract}

\keywords{Polynomials, Banach spaces, norm inequalities, ultraproducts}

\maketitle

\section{Introduction}

In this article we study the factor problem in the context of ultraproducts of Banach spaces. This problem can be stated as follows: for a Banach space $X$ over a field $\zK$ (with $\zK=\zR$ or $\zK=\zC$) and natural numbers $k_1,\cdots, k_n$ find the optimal constant $M$ such that, given any set of continuous scalar polynomials $P_1,\cdots,P_n:X\rightarrow \zK$, of degrees $k_1,\cdots,k_n$; the inequality
\begin{equation}\label{problema}
M \Vert P_1 \cdots P_n\Vert \ge  \, \Vert P_1 \Vert \cdots \Vert P_n \Vert
\end{equation}
holds, where $\Vert P \Vert = \sup_{\Vert x \Vert_X=1} \vert P(x)\vert$. We also study a variant of the problem in which we require the polynomials to be homogeneous.

Recall that a function $P:X\rightarrow \zK$ is a continuous $k-$homogeneous polynomial if there is a continuous $k-$linear function $T:X^k\rightarrow \zK$ for which $P(x)=T(x,\cdots,x)$. A function $Q:X\rightarrow \zK$ is a continuous polynomial of degree $k$ if $Q=\sum_{l=0}^k Q_l$ with $Q_0$ a constant, $Q_l$ ($1\leq l \leq k$) an $l-$homogeneous polynomial and $Q_k \neq 0$ . 

The factor problem has been studied by several authors. In \cite{BST}, C. Ben\'{i}tez, Y. Sarantopoulos and A. Tonge proved that, for continuous polynomials, inequality (\ref{problema}) holds with constant
\[
M=\frac{(k_1+\cdots + k_n)^{(k_1+\cdots +k_n)}}{k_1^{k_1} \cdots k_n^{k_n}}
\]
for any complex Banach space. The authors also showed that this is the best universal constant, since there are polynomials on $\ell_1$ for which equality prevails. 
For complex Hilbert spaces and homogeneous polynomials, D. Pinasco proved in \cite{P} that the optimal constant is
\begin{equation}\nonumber
M=\sqrt{\frac{(k_1+\cdots + k_n)^{(k_1+\cdots +k_n)}}{k_1^{k_1} \cdots k_n^{k_n}}}.
\end{equation}
This is a generalization of the result for linear functions obtained by Arias-de-Reyna in \cite{A}. In \cite{CPR}, also for homogeneous polynomials, D. Carando, D. Pinasco and the author proved that for any complex $L_p(\mu)$ space, with $dim(L_p(\mu))\geq n$ and $1<p<2$, the optimal constant is
\begin{equation}\nonumber
M=\sqrt[p]{\frac{(k_1+\cdots + k_n)^{(k_1+\cdots +k_n)}}{k_1^{k_1} \cdots k_n^{k_n}}}.
\end{equation}

This article is partially motivated by the work of M. Lindstr\"{o}m and R. A. Ryan in \cite{LR}. In that article they studied, among other things, a problem similar to (\ref{problema}): finding the so called polarization constant of a Banach space. They found a relation between the polarization constant of the ultraproduct $(X_i)_\fU$ and the polarization constant of each of the spaces $X_i$. Our objective is to do an analogous analysis for our problem (\ref{problema}). That is, to find a relation between the factor problem for the space $(X_i)_\fU$  and the factor problem for the spaces $X_i$.

In Section 2 we give some basic definitions and results of ultraproducts needed for our discussion. In Section 3 we state and prove the main result of this paper, involving ultraproducts, and a similar result on biduals.

\section{Ultraproducts}

We begin with some definitions, notations and basic results on filters, ultrafilters and ultraproducts. Most of the content presented in this section, as well as an exhaustive exposition on ultraproducts, can be found in Heinrich's article \cite{H}.

A filter $\fU$ on a family $I$ is a collection of non empty subsets of $I$ closed by finite intersections and inclusions. An ultrafilter is maximal filter.

In order to define the ultraproduct of Banach spaces, we are going to need some topological results first. 

\begin{defin} Let $\fU$ be an ultrafilter on $I$ and $X$ a topological space. We say that the limit of $(x_i)_{i\in I} \subseteq X$ respect of $\fU$ is $x$ if for every open neighborhood $U$ of $x$ the set $\{i\in I: x_i \in U\}$ is an element of $\fU$. We denote
$$ \displaystyle\lim_{i,\fU} x_i = x.$$

\end{defin}

The following is Proposition 1.5 from \cite{H}.

\begin{prop}\label{buenadef} Let $\fU$ be an ultrafilter on $I$, $X$ a compact Hausdorff space and $(x_i)_{i\in I} \subseteq X$. Then, the limit of $(x_i)_{i\in I}$ respect of $\fU$ exists and is unique.
\end{prop}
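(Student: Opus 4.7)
The plan is to prove existence and uniqueness separately, each by contradiction, using the two defining features of $X$: uniqueness will use the Hausdorff property, and existence will use compactness together with the maximality of $\fU$.

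For \emph{uniqueness}, I would suppose two distinct limits $x \neq y$ exist. Since $X$ is Hausdorff, there are disjoint open neighborhoods $U \ni x$ and $V \ni y$. By the definition of limit along $\fU$, both index sets $A = \{i \in I : x_i \in U\}$ and $B = \{i \in I : x_i \in V\}$ belong to $\fU$. Since filters are closed under finite intersection, $A \cap B \in \fU$; but $U \cap V = \emptyset$ forces $A \cap B = \emptyset$, contradicting the fact that the empty set is not a member of any filter.

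For \emph{existence}, I would argue by contradiction: assume no $x \in X$ is a limit. Then for every $x \in X$ there is an open neighborhood $U_x$ of $x$ such that the set $S_x := \{i \in I : x_i \in U_x\}$ does not belong to $\fU$. This is the point where I would invoke the hypothesis that $\fU$ is an \emph{ultra}filter rather than just a filter: maximality is equivalent to the dichotomy that for every subset $S \subseteq I$, either $S \in \fU$ or $I \setminus S \in \fU$. Hence $T_x := I \setminus S_x = \{i \in I : x_i \notin U_x\}$ lies in $\fU$ for every $x$. Now I would apply compactness of $X$ to extract a finite subcover $U_{x_1}, \ldots, U_{x_n}$ of $X$. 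The corresponding sets $T_{x_1}, \ldots, T_{x_n}$ are all in $\fU$, hence so is their finite intersection $\bigcap_{j=1}^{n} T_{x_j}$. But this intersection is $\{i : x_i \notin U_{x_j} \text{ for all } j\} = \emptyset$ because the $U_{x_j}$ cover $X$, again contradicting $\emptyset \notin \fU$.

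I do not expect any serious obstacle: the argument is a standard exercise in point-set topology once one has internalized the ultrafilter dichotomy. The only subtle step is recognizing that a mere filter would not suffice for existence (in the absence of maximality one cannot conclude $T_x \in \fU$ from $S_x \notin \fU$), which explains why the hypothesis in the proposition is that $\fU$ be an ultrafilter and not simply a filter.
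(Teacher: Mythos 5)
Your proof is correct. The paper itself gives no proof of this proposition --- it simply cites it as Proposition 1.5 of Heinrich's article \cite{H} --- and your argument (Hausdorff separation plus closure under finite intersections for uniqueness; compactness plus the ultrafilter dichotomy for existence) is exactly the standard one that reference supplies.
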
 

Later on, we are going to need the next basic Lemma about limits of ultraproducts, whose proof is an easy exercise of basic topology and ultrafilters.

\begin{lem}\label{lemlimit} Let $\fU$ be an ultrafilter on $I$ and $\{x_i\}_{i\in I}$ a family of real numbers. Assume that the limit of $(x_i)_{i\in I} \subseteq \zR$ respect of $\fU$ exists and let $r$ be a real number such that there is a subset $U$ of $\{i: r<x_i\}$ with $U\in \fU$. Then
$$ r \leq \displaystyle\lim_{i,\fU} x_i. $$

\end{lem}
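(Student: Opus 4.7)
The plan is to argue by contradiction. Suppose that $L:=\lim_{i,\fU} x_i$ satisfies $L<r$. Choose $\varepsilon>0$ small enough that $L+\varepsilon<r$, and consider the open neighborhood $W=(L-\varepsilon,L+\varepsilon)$ of $L$ in $\zR$. By the definition of ultralimit, the set
\[
V=\{i\in I : x_i\in W\}
\]
must belong to $\fU$.

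Next I would exploit the two defining closure properties of a filter. From the hypothesis, $U\subseteq\{i\in I: r<x_i\}$ with $U\in\fU$, so by closure under supersets the set $\{i\in I: r<x_i\}$ is itself in $\fU$. Then by closure under finite intersections,
\[
V\cap\{i\in I: r<x_i\}\in\fU.
\]
However, any index $i$ in this intersection would satisfy simultaneously $x_i<L+\varepsilon<r$ and $r<x_i$, which is impossible. Hence the intersection is empty.

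Since the empty set is not a member of any filter (filters consist of nonempty subsets), we obtain a contradiction, and therefore $r\leq L=\lim_{i,\fU} x_i$, as desired. I do not expect any serious obstacle here; the only thing to be careful about is invoking the correct filter axioms (closure under supersets to promote $U$ to $\{i: r<x_i\}$, and closure under finite intersections to combine with $V$), together with the basic fact that $\emptyset\notin\fU$.
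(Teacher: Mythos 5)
Your argument is correct and is precisely the standard one the paper leaves as "an easy exercise": it uses only the filter axioms as defined in Section 2 (nonemptiness of members, closure under supersets and finite intersections) together with the definition of the ultralimit. Nothing further is needed.
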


We are now able to define the ultraproduct of Banach spaces. Given an ultrafilter $\fU$ on $I$ and a family of Banach spaces $(X_i)_{i\in I}$, take the Banach space $\ell_\infty(I,X_i)$ of norm bounded families $(x_i)_{i\in I}$ with $x_i \in X_i$ and norm
$$\Vert (x_i)_{i\in I} \Vert = \sup_{i\in I} \Vert x_i \Vert.$$ The ultraproduct $(X_i)_\fU$ is defined as the quotient space $\ell_\infty(I,X_i)/ \sim $ where
$$ (x_i)_{i\in I}\sim (y_i)_{i\in I} \Leftrightarrow \displaystyle\lim_{i,\fU} \Vert x_i - y_i \Vert = 0.$$

Observe that Proposition \ref{buenadef} assures us that this limit exists for every pair $(x_i)_{i\in I}, (y_i)_{i\in I}\in \ell_\infty(I,X_i)$. We denote the class of $(x_i)_{i\in I}$ in $(X_i)_\fU$ by $(x_i)_\fU$.

The following result is the polynomial version of Definition 2.2 from \cite{H} (see also Proposition 2.3 from \cite{LR}). The reasoning behind is almost the same.

\begin{prop}\label{pollim} Given two ultraproducts $(X_i)_\fU$, $(Y_i)_\fU$ and a family of continuous homogeneous polynomials $\{P_i\}_{i\in I}$ of degree $k$ with 
$$ \displaystyle\sup_{i\in I} \Vert P_i \Vert < \infty,$$
the map $P:(X_i)_\fU \longrightarrow (Y_i)_\fU$ defined by $P((x_i)_\fU)=(P_i(x_i))_\fU$ is a continuous homogeneous polynomial of degree $k$. Moreover $\Vert P \Vert = \displaystyle\lim_{i,\fU} \Vert P_i \Vert$.

If $\zK=\zC$, the hypothesis of homogeneity can be omitted, but in this case the degree of $P$ can be lower than $k$.
\end{prop}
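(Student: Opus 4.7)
The plan is to construct $P$ out of the symmetric multilinear forms associated with the $P_i$'s. First, I would verify that the assignment $P((x_i)_\fU):=(P_i(x_i))_\fU$ is well defined: given two bounded representatives $(x_i)_{i\in I}$, $(x_i')_{i\in I}$ with $\lim_{i,\fU}\|x_i-x_i'\|=0$, the standard telescoping identity for symmetric $k$-linear forms gives
\[
\|P_i(x_i)-P_i(x_i')\|\leq k\,\|T_i\|\,M^{k-1}\|x_i-x_i'\|,
\]
where $T_i$ is the polar of $P_i$ and $M$ bounds the norms of both representatives. By the classical polarization inequality $\|T_i\|\leq \frac{k^k}{k!}\|P_i\|$, so $\sup_i\|T_i\|<\infty$ and the right-hand side tends to $0$ along $\fU$. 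The same estimate shows $(P_i(x_i))_{i\in I}\in\ell_\infty(I,Y_i)$, so the class $(P_i(x_i))_\fU$ is legitimate.

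Next, I would exhibit $P$ as a $k$-homogeneous polynomial. The uniform bound on $\|T_i\|$ allows me to define $T:((X_i)_\fU)^k\to (Y_i)_\fU$ by
\[
T\bigl((x_i^{(1)})_\fU,\ldots,(x_i^{(k)})_\fU\bigr):=\bigl(T_i(x_i^{(1)},\ldots,x_i^{(k)})\bigr)_\fU.
\]
A coordinatewise version of the same telescoping argument shows that $T$ is well defined and bounded, $k$-linearity is inherited coordinatewise, and clearly $P(u)=T(u,\ldots,u)$.

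For the norm equality $\|P\|=\lim_{i,\fU}\|P_i\|$ I would prove two matching inequalities. If $\|(x_i)_\fU\|\leq 1$, I can pick a representative with $\|x_i\|\leq 1+\varepsilon$ on some element of $\fU$, so
\[
\|P((x_i)_\fU)\|=\lim_{i,\fU}\|P_i(x_i)\|\leq (1+\varepsilon)^k\lim_{i,\fU}\|P_i\|.
\]
For the reverse bound, for each $i$ choose a unit vector $y_i\in X_i$ with $\|P_i(y_i)\|\geq (1-\varepsilon)\|P_i\|$; then Lemma \ref{lemlimit} gives $\|P((y_i)_\fU)\|\geq (1-\varepsilon)\lim_{i,\fU}\|P_i\|$. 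Letting $\varepsilon\to 0$ closes the estimate.

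Finally, for the complex non-homogeneous case the Cauchy-type extraction formula
\[
P_{i,l}(x)=\frac{1}{2\pi}\int_0^{2\pi}P_i(e^{i\theta}x)e^{-il\theta}\,d\theta
\]
produces the $l$-homogeneous component of $P_i$ and shows $\|P_{i,l}\|\leq \|P_i\|$, so the uniform bound descends to every level. Applying the homogeneous case to each $l$ and summing yields $P=\sum_{l=0}^k P^{(l)}$, with $P^{(l)}$ an $l$-homogeneous polynomial on $(X_i)_\fU$. The degree of the sum is strictly less than $k$ precisely when $\lim_{i,\fU}\|P_{i,k}\|=0$, which accounts for the weakened conclusion. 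The main technical point I expect to dwell on is the polarization step ensuring $\sup_i\|T_i\|<\infty$; fortunately the polarization constant depends only on $k$ and on $\zK$, not on the ambient Banach spaces, so it transfers uniformly across the family.
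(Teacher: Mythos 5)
Your treatment of the homogeneous case is essentially the paper's: coordinatewise definition of the polar $T$, the polarization bound $\Vert T_i\Vert\leq \frac{k^k}{k!}\Vert P_i\Vert$ to get $\sup_i\Vert T_i\Vert<\infty$, and the two matching inequalities for the norm. Where you genuinely diverge is the non-homogeneous complex case. The paper homogenizes: it passes to $X_i\oplus_\infty\zC$ and the degree-$k$ homogeneous polynomial $\tilde P_i(x,\lambda)=\sum_l P_{i,l}(x)\lambda^{k-l}$, uses the maximum modulus principle to get $\Vert\tilde P_i\Vert=\Vert P_i\Vert$, applies the homogeneous case once, and then restricts to $\lambda=1$. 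You instead extract each homogeneous component by the Cauchy--Fourier formula $P_{i,l}(x)=\frac{1}{2\pi}\int_0^{2\pi}P_i(e^{i\theta}x)e^{-il\theta}\,d\theta$, obtain the uniform bound $\Vert P_{i,l}\Vert\leq\Vert P_i\Vert$, and apply the homogeneous case levelwise. Both devices exploit complex scalars in the same essential way (rotation invariance of the sphere), and both correctly identify the degree drop with $\lim_{i,\fU}\Vert P_{i,k}\Vert=0$. The one thing your route leaves unsaid is the ``moreover'' clause in the non-homogeneous case: summing the levelwise conclusions gives $\Vert P\Vert\leq\sum_l\lim_{i,\fU}\Vert P_{i,l}\Vert$, which is not the claimed $\Vert P\Vert=\lim_{i,\fU}\Vert P_i\Vert$. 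The paper gets this for free from $\Vert\tilde P_i\Vert=\Vert P_i\Vert$; in your setup you should add the direct argument: since $P((x_i)_\fU)=(P_i(x_i))_\fU$, for $\Vert(x_i)_\fU\Vert=1$ one may normalize the representative so that $\Vert x_i\Vert=1$ for all $i$, giving $\vert P((x_i)_\fU)\vert=\lim_{i,\fU}\vert P_i(x_i)\vert\leq\lim_{i,\fU}\Vert P_i\Vert$, and the reverse inequality follows by choosing near-maximizing unit vectors exactly as in your homogeneous argument. This equality is used later in the Main Theorem, so it is worth making explicit; with that one-line addition your proof is complete.
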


\begin{proof} Let us start with the homogeneous case. Write $P_i(x)=T_i(x,\cdots,x)$ with $T_i$ a $k-$linear continuous function. Define $T:(X_i)_\fU^k \longrightarrow (Y_i)_\fU$ by 
$$T((x^1_i)_\fU,\cdots,(x^k_i)_\fU)=(T_i(x^1_i,\cdots ,x^k_i))_\fU.$$
$T$ is well defined since, by the polarization formula, $ \displaystyle\sup_{i\in I} \Vert T_i \Vert \leq   \displaystyle\sup_{i\in I} \frac{k^k}{k!}\Vert P_i \Vert< \infty$.

Seeing that for each coordinate the maps $T_i$ are linear, the map $T$ is linear in each coordinate, and thus it is a $k-$linear function. Given that 
$$P((x_i)_\fU)=(P_i(x_i))_\fU=(T_i(x_i,\cdots,x_i))_\fU=T((x_i)_\fU,\cdots,(x_i)_\fU)$$
we conclude that $P$ is a $k-$homogeneous polynomial.

To see the equality of the norms for every $i$ choose a norm $1$ element $x_i\in X_i$ where $P_i$ almost attains its norm, and from there  is easy to deduce that $\Vert P \Vert \geq \displaystyle\lim_{i,\fU} \Vert P_i \Vert$. For the other inequality we use that $$ |P((x_i)_\fU)|= \displaystyle\lim_{i,\fU}|P_i(x_i)| \leq \displaystyle\lim_{i,\fU}\Vert P_i \Vert \Vert x_i \Vert^k = \left(\displaystyle\lim_{i,\fU}\Vert P_i \Vert \right)\Vert (x_i)_\fU \Vert^k .$$

Now we treat the non homogeneous case. For each $i\in I$ we write $P_i=\sum_{l=0}^kP_{i,l}$, with $P_{i,0}$ a constant and $P_{i,l}$ ($1\leq l \leq k$) an $l-$homogeneous polynomial. Take the direct sum $X_i \oplus_\infty \zC$ of $X_i$ and $\zC$, endowed with the norm $\Vert (x,\lambda) \Vert =\max \{ \Vert x \Vert, | \lambda| \}$. Consider the polynomial $\tilde{P_i}:X_i \oplus_\infty \zC\rightarrow Y_i$ defined by $\tilde{P}_i(x,\lambda)=\sum_{l=0}^k P_{i,l}(x)\lambda^{k-l}$. The polynomial $\tilde{P}_i$ is an homogeneous polynomial of degree $k$ and, using the maximum modulus principle, it is easy to see that $\Vert P_i \Vert = \Vert \tilde{P_i} \Vert $. Then, by the homogeneous case, we have that the polynomial $\tilde{P}:(X_i \oplus_\infty \zC)_\fU \rightarrow (Y_i)_\fU$ defined as $\tilde{P}((x_i,\lambda_i)_\fU)=(\tilde{P}_i(x_i,\lambda_i))_\fU$ is a continuous homogeneous polynomial of degree $k$  and $\Vert \tilde{P} \Vert =\displaystyle\lim_{i,\fU} \Vert \tilde{P}_i \Vert =\displaystyle\lim_{i,\fU} \Vert P_i \Vert$.

Via the identification $(X_i \oplus_\infty \zC)_\fU=(X_i)_\fU \oplus_\infty \zC$ given by $(x_i,\lambda_i)_\fU=((x_i)_\fU,\displaystyle\lim_{i,\fU} \lambda_i)$ we have that the polynomial $Q:(X_i)_\fU \oplus_\infty \zC\rightarrow \zC$ defined as $Q((x_i)_\fU,\lambda)=\tilde{P}((x_i,\lambda)_\fU)$ is a continuous homogeneous polynomial of degree $k$ and $\Vert Q\Vert =\Vert \tilde{P}\Vert$. Then, the polynomial $P((x_i)_\fU)=Q((x_i)_\fU,1)$ is a continuous polynomial of degree at most $k$ and $\Vert P\Vert =\Vert Q\Vert =\displaystyle\lim_{i,\fU} \Vert P_i \Vert$. If $\displaystyle\lim_{i,\fU} \Vert P_{i,k} \Vert =0 $ then the degree of $P$ is lower than $k$.

\end{proof}

Note that, in the last proof, we can take the same approach used for non homogeneous polynomials in the real case, but we would not have the same control over the norms.

\section{ Main result }

This section contains our main result. As mentioned above, this result is partially motivated by Theorem 3.2 from \cite{LR}. We follow similar ideas for the proof. First, let us fix some notation that will be used throughout this section.

In this section, all polynomials considered are continuous scalar polynomials. Given a Banach space $X$, $B_X$ and $S_X$ denote the unit ball and the unit sphere of $X$ respectively, and $X^*$ is the dual of $X$. Given a polynomial $P$ on $X$, $deg(P)$ stands for the degree of $P$. 

\begin{defin} For a Banach space $X$ let $D(X,k_1,\cdots,k_n)$ denote the smallest constant that satisfies (\ref{problema}) for polynomials of degree $k_1,\cdots,k_n$. We also define $C(X,k_1,\cdots,k_n)$ as the smallest constant that satisfies (\ref{problema}) for homogeneous polynomials of degree $k_1,\cdots,k_n$.
\end{defin}

Throughout this section most of the results will have two parts. The first involving the constant $C(X,k_1,\cdots,k_n)$ for homogeneous polynomials and the second involving the constant $D(X,k_1,\cdots,k_n)$ for arbitrary polynomials. Given that the proof of both parts are almost equal, we will limit to prove only the second part of the results.

Recall that a space $X$ has the $1 +$ uniform approximation property if for all $n\in \zN$, exists $m=m(n)$ such that for every subspace $M\subset X$ with $dim(M)=n$ and every $\varepsilon > 0$ there is an operator $T\in \mathcal{L}(X,X)$ with $T|_M=id$, $rg(T)\leq m$ and $\Vert T\Vert  \leq 1 + \varepsilon$ (i.e. for every $\varepsilon > 0$ $X$ has the $1+\varepsilon$ uniform approximation property).

\begin{mainthm}\label{main thm} If $\fU$ is an ultrafilter on a family $I$ and $(X_i)_\fU$ is an ultraproduct of complex Banach spaces then

\begin{enumerate}
\item[(a)] $C((X_i)_\fU,k_1,\cdots,k_n) \geq \displaystyle\lim_{i,\fU}(C(X_i,k_1,\cdots,k_n)).$

\item[(b)] $D((X_i)_\fU,k_1,\cdots,k_n) \geq \displaystyle\lim_{i,\fU}(D(X_i,k_1,\cdots,k_n)).$
\end{enumerate}
Moreover, if each $X_i$ has the $1+$ uniform approximation property, equality holds in both cases.
\end{mainthm}

In order to prove this Theorem some auxiliary lemmas are going to be needed. The first one is due to Heinrich \cite{H}.

\begin{lem}\label{aprox} Given an ultraproduct of Banach spaces $(X_i)_\fU$, if each $X_i$ has the $1+$ uniform approximation property then $(X_i)_\fU$ has the metric approximation property.
\end{lem}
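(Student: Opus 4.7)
The plan is to reduce the MAP of $(X_i)_\fU$ to its standard equivalent form on finite-dimensional subspaces: for every finite-dimensional $M\subseteq (X_i)_\fU$ and every $\varepsilon>0$, I will produce a finite-rank operator $T$ on $(X_i)_\fU$ with $\Vert T\Vert\leq 1+\varepsilon$ and $T|_M=\mathrm{id}$. Rescaling by $(1+\varepsilon)^{-1}$ then gives an operator of norm at most $1$ that is $\varepsilon$-close to the identity on $B_M$, which yields MAP (the equivalence of the compact-set and finite-dimensional formulations is standard: a compact set is well approximated by a finite $\varepsilon$-net, whose span is finite dimensional).

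Fix a basis $(x_i^1)_\fU,\ldots,(x_i^n)_\fU$ of $M$ and, for each $i\in I$, set $M_i=\mathrm{span}\{x_i^1,\ldots,x_i^n\}\subseteq X_i$, so $\dim M_i\leq n$. The $1+$ uniform approximation property of $X_i$ yields an integer $m=m(n)$, independent of $i$, together with $T_i\in\mathcal{L}(X_i,X_i)$ satisfying $T_i|_{M_i}=\mathrm{id}$, $\mathrm{rank}(T_i)\leq m$, and $\Vert T_i\Vert\leq 1+\varepsilon$. Since $\sup_i\Vert T_i\Vert\leq 1+\varepsilon<\infty$, Proposition \ref{pollim} applied with $k=1$ produces a continuous linear map $T:(X_i)_\fU\to(X_i)_\fU$, $T((y_i)_\fU)=(T_iy_i)_\fU$, with $\Vert T\Vert=\lim_{i,\fU}\Vert T_i\Vert\leq 1+\varepsilon$. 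Any element of $M$ admits a representative $(y_i)_\fU$ with $y_i\in M_i$, so $T_iy_i=y_i$ and hence $T|_M=\mathrm{id}$.

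It remains to show that $T$ has rank at most $m$. The ranges $F_i:=T_i(X_i)$ satisfy $\dim F_i\leq m$; partitioning $I$ according to the value of $\dim F_i\in\{0,1,\ldots,m\}$ and using maximality of $\fU$, I may assume $\dim F_i=m_0$ is constant on a set in $\fU$, with $m_0\leq m$. Pick an Auerbach basis $e_i^1,\ldots,e_i^{m_0}$ of $F_i$ with biorthogonal functionals $\varphi_i^1,\ldots,\varphi_i^{m_0}$, all of norm $1$. For a fixed bounded family $(y_i)$, the scalars $a_i^j:=\varphi_i^j(T_iy_i)$ satisfy $|a_i^j|\leq(1+\varepsilon)\sup_i\Vert y_i\Vert$, so the limits $c_j:=\lim_{i,\fU}a_i^j$ exist. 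Since
\[
\Bigl\Vert T_iy_i-\sum_{j=1}^{m_0}c_je_i^j\Bigr\Vert\leq\sum_{j=1}^{m_0}|a_i^j-c_j|\longrightarrow 0\quad\text{along }\fU,
\]
we obtain $T((y_i)_\fU)=\sum_{j=1}^{m_0}c_j(e_i^j)_\fU$, so the range of $T$ sits inside the span of the $m_0$ fixed vectors $(e_i^j)_\fU\in(X_i)_\fU$, proving $\mathrm{rank}(T)\leq m_0\leq m$.

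The main obstacle is this final rank-control step: although each $T_i$ has rank at most $m$, the ranges $F_i$ live in distinct spaces $X_i$ and must be glued into a single finite-dimensional subspace of the ultraproduct. The three key ingredients are the uniformity of $m=m(n)$ (supplied by the $1+$ uniform approximation property), the ultrafilter pigeonhole that fixes the common dimension $m_0$, and Auerbach bases, which ensure that the coordinate functionals are uniformly bounded and hence admit ultrafilter limits. Everything else is a direct application of Proposition \ref{pollim} together with the rescaling that converts $\Vert T\Vert\leq 1+\varepsilon$ into the genuine MAP bound $\Vert T'\Vert\leq 1$.
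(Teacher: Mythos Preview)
The paper does not prove this lemma; it simply attributes the result to Heinrich \cite{H} and moves on. Your argument is essentially the standard one from Heinrich's paper: lift a finite-dimensional subspace $M$ of the ultraproduct to subspaces $M_i\subseteq X_i$, apply the uniform approximation property coordinatewise, take the ultraproduct of the resulting operators, and use Auerbach bases together with an ultrafilter pigeonhole to force the range of $T$ into a fixed $m_0$-dimensional subspace of $(X_i)_\fU$. The rank-control step is handled cleanly and the reduction of the metric approximation property to the finite-dimensional formulation is correct.

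One genuine point needs attention. You write that ``the $1+$ uniform approximation property of $X_i$ yields an integer $m=m(n)$, independent of $i$''. As the property is defined in the paper (for a single space), each $X_i$ comes with its own function $m_i(n)$, and nothing in the hypothesis ``each $X_i$ has the $1+$ uniform approximation property'' forces $\sup_i m_i(n)<\infty$. Without a common bound, your pigeonhole on $\dim F_i\in\{0,1,\ldots,m\}$ becomes a countably infinite partition, and an ultrafilter need not select any single piece; the operator $T$ could then fail to have finite rank. Heinrich's actual theorem assumes the \emph{joint} $\lambda$-u.a.p.\ (a common $m(n)$ for the whole family), and that is also what the paper's intended applications (Orlicz spaces, $\mathcal L_{p,\lambda}$-spaces) supply. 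So your proof is correct for the result Heinrich proves and the paper means to quote, but the sentence ``independent of $i$'' is an additional hypothesis you are importing, not a consequence of the lemma as literally stated; you should flag it as such.
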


When working with the constants $C(X,k_1,\cdots,k_n)$ and $D(X,k_1,\cdots,k_n)$, the following characterization may result handy.

\begin{lem}\label{alternat} a) The constant $C(X,k_1,\cdots,k_n)$ is the biggest constant $M$ such that given any $\varepsilon >0$ there exist a set of homogeneous continuous polynomials $\{P_j\}_{j=1}^n$ with $deg(P_j)\leq k_j$ such that

\begin{equation} \label{condition} M\left \Vert \prod_{j=1}^{n} P_j \right \Vert \leq  (1+\varepsilon) \prod_{j=1}^{n} \Vert P_j \Vert. \end{equation}

\begin{flushleft}
b) The constant $D(X,k_1,\cdots,k_n)$ is the biggest constant satisfying the same for arbitrary polynomials.
\end{flushleft}

\end{lem}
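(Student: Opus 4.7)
The plan is to observe that both sides of the claimed equality express the same quantity, namely the supremum of the ratios $\prod_j\Vert P_j\Vert/\Vert\prod_j P_j\Vert$ over the allowed class of polynomials. The definition of $D(X,k_1,\ldots,k_n)$ as the smallest constant satisfying (\ref{problema}) for polynomials of degree $k_j$ gives the equivalent description $D=\sup_{P_j}\prod_j\Vert P_j\Vert/\Vert\prod_j P_j\Vert$ over such tuples. On the other hand, the condition in the lemma---that for every $\varepsilon>0$ there exist admissible polynomials satisfying (\ref{condition})---is equivalent, after dividing by $\Vert\prod_j P_j\Vert$ and sending $\varepsilon\to 0^+$, to $M\leq\sup_{P_j}\prod_j\Vert P_j\Vert/\Vert\prod_j P_j\Vert$. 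Hence the biggest such $M$ equals this supremum, and the entire lemma reduces to matching the two supremum classes.

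For the direction $D\leq M^*$ (with $M^*$ denoting the biggest constant meeting the condition): given $\varepsilon>0$, I would invoke the supremum characterization of $D$ to produce polynomials of degree exactly $k_j$ whose ratio exceeds $D/(1+\varepsilon)$; rearranging yields (\ref{condition}) with $M=D$. For the reverse direction $M^*\leq D$, I would take any admissible tuple---polynomials of degree $\leq k_j$---and bound its ratio by $D$; for tuples with $\deg P_j<k_j$ for some $j$, the natural perturbation $P_j\mapsto P_j+\eta\varphi^{k_j}$ with $\varphi\in S_{X^*}$ lies in the exact-degree class, and by continuity of the polynomial norm the ratio is preserved as $\eta\to 0^+$. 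This gives $M/(1+\varepsilon)\leq D$ for every $\varepsilon>0$, hence $M\leq D$.

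Part (a) proceeds by the same formal argument. The additive perturbation above destroys homogeneity, so for a homogeneous $P_j$ of degree $d_j<k_j$ I would instead use the multiplicative adjustment $P_j\mapsto P_j\cdot\varphi^{k_j-d_j}$ with $\varphi\in S_{X^*}$ chosen via Hahn--Banach so that $\varphi(x_0)=1$ at a near-extremal point $x_0\in S_X$ for $|\prod_j P_j|$. At such a point the denominator $\Vert\prod_j(P_j\cdot\varphi^{k_j-d_j})\Vert$ equals $\Vert\prod_j P_j\Vert$ up to a factor $1-o(1)$, while each factor satisfies $\Vert P_j\cdot\varphi^{k_j-d_j}\Vert\leq\Vert P_j\Vert$.

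The main obstacle I foresee is precisely this last point in the homogeneous case: verifying that the multiplicative adjustment does not strictly shrink the ratio. A cleaner alternative is to pass to $X\oplus_\infty\zC$ via $\tilde P_j(x,\lambda)=P_j(x)\lambda^{k_j-d_j}$, under which both the individual norms and the product norm are preserved exactly; this shows that homogeneous polynomials of degree $\leq k_j$ on $X$ cannot yield a ratio exceeding $C(X\oplus_\infty\zC,k_1,\ldots,k_n)$, which suggests that the appropriate reading of $C(X,\cdot)$ in the lemma already incorporates the ``$\leq k_j$'' convention. Modulo this degree subtlety, the proof is entirely formal.
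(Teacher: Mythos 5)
Your reduction of the lemma to the supremum characterization $D(X,k_1,\ldots,k_n)=\sup \prod_j\Vert P_j\Vert/\Vert\prod_j P_j\Vert$ is correct, and part (b) of your argument goes through: the additive perturbation $P_j\mapsto P_j+\eta\varphi^{k_j}$ promotes a polynomial of degree $<k_j$ to exact degree $k_j$, and as $\eta\to 0^+$ all the norms involved converge to those of the original tuple, so tuples with $\deg P_j\leq k_j$ cannot beat $D(X,k_1,\ldots,k_n)$. This is in substance what the paper does: its entire proof is the remark that the constant is monotone in the degrees (so that the ``$\deg P_j\leq k_j$'' class gives the same supremum as the exact-degree class) plus the fact that the infimum defining $D$ is a greatest lower bound.

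The gap is in part (a), and it is exactly the point you flag but do not close. Your multiplicative adjustment norms $\varphi$ at a near-extremal point of the \emph{product}; that prevents the denominator $\Vert\prod_j P_j\varphi^{k_j-d_j}\Vert$ from collapsing, but it is the \emph{numerator} that needs protecting: $\Vert P_j\varphi^{k_j-d_j}\Vert$ can be much smaller than $\Vert P_j\Vert$ when $P_j$ attains its norm far from where $\varphi$ peaks, so the new ratio may be strictly smaller than the old one and nothing follows. The repair is to use one functional per factor, normed where that factor is large: choose $x_j\in S_X$ with $|P_j(x_j)|\geq(1-\varepsilon)\Vert P_j\Vert$ and $\varphi_j\in S_{X^*}$ with $\varphi_j(x_j)=1$, and set $\tilde P_j=P_j\,\varphi_j^{\,k_j-d_j}$. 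Then $\tilde P_j$ is homogeneous of exact degree $k_j$, $\Vert\tilde P_j\Vert\geq|\tilde P_j(x_j)|\geq(1-\varepsilon)\Vert P_j\Vert$, and $\Vert\prod_j\tilde P_j\Vert\leq\Vert\prod_j P_j\Vert$, so the ratio does not drop by more than $(1-\varepsilon)^n$; letting $\varepsilon\to 0$ gives precisely the monotonicity $C(X,d_1,\ldots,d_n)\leq C(X,k_1,\ldots,k_n)$ for $d_j\leq k_j$ on which the lemma rests. Your fallback through $X\oplus_\infty\zC$ does not close the gap, since it only bounds the ratio by the constant of a different space and then appeals to a reinterpretation of the statement rather than proving it.
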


To prove this Lemma it is enough to see that $D(X,k_1,\cdots,k_n)$ is decreasing as a function of the degrees $k_1,\cdots, k_n$ and use that the infimum is the greatest lower bound.

\begin{rem}\label{rmkalternat} It is clear that in Lemma \ref{alternat} we can take the polynomials $\{P_j\}_{j=1}^n$ with $deg(P_j)= k_j$ instead of $deg(P_j)\leq k_j$. Later on we will use both versions of the Lemma. 
\end{rem}

One last lemma is needed for the proof of the Main Theorem.

\begin{lem}\label{normas} Let $P$ be a (not necessarily homogeneous) polynomial on a complex Banach space $X$ with $deg(P)=k$. For any point $x\in X$ 
\begin{equation} |P(x)|\leq  \max\{\Vert x \Vert, 1\}^k \Vert P\Vert  . \nonumber \end{equation}
\end{lem}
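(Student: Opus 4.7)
The plan is to homogenize $P$ and reduce to the elementary growth bound for homogeneous polynomials, reusing the construction already introduced in the proof of Proposition~\ref{pollim}. Write $P = \sum_{l=0}^k P_l$ with $P_l$ the $l$-homogeneous component of $P$, and consider the $k$-homogeneous polynomial $\tilde P : X \oplus_\infty \zC \to \zC$ defined by
\[
\tilde P(x,\lambda) \; = \; \sum_{l=0}^k P_l(x)\,\lambda^{k-l}.
\]
By construction $\tilde P(x,1) = P(x)$ for every $x \in X$, and $\Vert (x,1)\Vert = \max\{\Vert x\Vert,1\}$ in the $\oplus_\infty$ norm.

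For any continuous $k$-homogeneous scalar polynomial $Q$ on a Banach space, the bound $|Q(y)| \le \Vert Q\Vert\,\Vert y\Vert^k$ is immediate by homogeneity. Applying this to $\tilde P$ at the point $(x,1)$ yields
\[
|P(x)| \; = \; |\tilde P(x,1)| \; \le \; \Vert \tilde P\Vert\,\max\{\Vert x\Vert,1\}^k,
\]
so the lemma follows once we verify $\Vert \tilde P\Vert = \Vert P\Vert$.

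The inequality $\Vert P\Vert \le \Vert \tilde P\Vert$ is immediate by restricting the supremum in $\Vert \tilde P\Vert$ to points of the form $(x,1)$ with $\Vert x\Vert \le 1$. For the reverse direction, fix $x\in B_X$ and examine the one-variable polynomial $\lambda \mapsto \tilde P(x,\lambda)$. For $\lambda\neq 0$, $k$-homogeneity gives $\tilde P(x,\lambda) = \lambda^k P(x/\lambda)$; when $|\lambda|=1$ the vector $x/\lambda$ lies in $B_X$, so $|\tilde P(x,\lambda)|\le \Vert P\Vert$ on the unit circle. The maximum modulus principle extends this bound to the entire closed unit disk in $\lambda$, and taking a supremum over $x\in B_X$ produces $\Vert \tilde P\Vert \le \Vert P\Vert$.

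There is no real obstacle here: the argument is essentially the same homogenization/max-modulus trick used in Proposition~\ref{pollim}, and the only bookkeeping is to check that evaluating $\tilde P$ at $(x,1)$ recovers $P(x)$ while contributing the factor $\max\{\Vert x\Vert,1\}^k$ through the $\oplus_\infty$ norm.
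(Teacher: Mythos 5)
Your proof is correct and follows essentially the same route as the paper: homogenize $P$ to $\tilde P$ on $X\oplus_\infty\zC$, use the maximum modulus principle to get $\Vert\tilde P\Vert=\Vert P\Vert$, and evaluate at $(x,1)$. The paper simply cites the norm equality already established in the proof of Proposition~\ref{pollim}, whereas you re-derive it explicitly; the content is identical.
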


\begin{proof} If $P$ is homogeneous the result is rather obvious since we have the inequality
\begin{equation} |P(x)|\leq \Vert x \Vert^k \Vert P\Vert . \nonumber \end{equation}
Suppose that $P=\sum_{l=0}^k P_l$ with $P_l$ an  $l-$homogeneous polynomial. Consider the space $X \oplus_\infty \zC$ and the polynomial $\tilde{P}:X \oplus_\infty \zC\rightarrow \zC$ defined by $\tilde{P}(x,\lambda)=\sum_{l=0}^k P_l(x)\lambda^{k-l}$. The polynomial $\tilde{P}$ is homogeneous of degree $k$ and $\Vert P \Vert = \Vert \tilde{P} \Vert $. Then, using that $\tilde{P}$ is homogeneous we have
\begin{equation} |P(x)|=|\tilde{P} (x,1)| \leq \Vert (x,1) \Vert^k \Vert \tilde{P} \Vert = \max\{\Vert x \Vert, 1\}^k \Vert P\Vert . \nonumber \end{equation}
\end{proof}

We are now able  to prove our main result.

\begin{proof}[Proof of Main Theorem] 
Throughout this proof we regard the space $(\zC)_\fU$ as $\zC$ via the identification $(\lambda_i)_\fU=\displaystyle\lim_{i,\fU} \lambda_i$.

First, we are going to see that $D((X_i)_\fU,k_1,\cdots,k_n) \geq \displaystyle\lim_{i,\fU}(D(X_i,k_1,\cdots,k_n))$. To do this we only need to prove that $\displaystyle\lim_{i,\fU}(D(X_i,k_1,\cdots,k_n))$ satisfies (\ref{condition}). Given $\varepsilon >0$ we need to find a set of polynomials $\{P_{j}\}_{j=1}^n$ on $(X_i)_\fU$ with $deg(P_{j})\leq k_j$  such that
$$ \displaystyle\lim_{i,\fU}(D(X_i,k_1,\cdots,k_n)) \left \Vert \prod_{j=1}^{n} P_j \right \Vert \leq (1+\varepsilon) \prod_{j=1}^{n} \left \Vert P_j \right \Vert .$$

By Remark \ref{rmkalternat} we know that for each $i\in I$ there is a set of polynomials $\{P_{i,j}\}_{j=1}^n$ on $X_i$ with $deg(P_{i,j})=k_j$ such that
$$ D(X_i,k_1,\cdots,k_n) \left \Vert \prod_{j=1}^{n} P_{i,j} \right \Vert \leq (1 +\varepsilon)\prod_{j=1}^{n} \left \Vert P_{i,j} \right \Vert.$$
Replacing $P_{i,j}$ with $P_{i,j}/\Vert P_{i,j} \Vert$ we may assume that $\Vert P_{i,j} \Vert =1$. Define the polynomials $\{P_j\}_{j=1}^n$ on $(X_i)_\fU$ by $P_j((x_i)_\fU)=(P_{i,j}(x_i))_\fU$. Then, by Proposition \ref{pollim}, $deg(P_j)\leq k_j$ and
\begin{eqnarray} \displaystyle\lim_{i,\fU}(D(X_i,k_1,\cdots,k_n)) \left \Vert \prod_{j=1}^{n} P_{j} \right \Vert &=& \displaystyle\lim_{i,\fU} \left(D(X_i,k_1,\cdots,k_n)\left \Vert \prod_{j=1}^{n} P_{i,j} \right \Vert \right)  \nonumber \\
&\leq& \displaystyle\lim_{i,\fU}\left((1+\varepsilon)\prod_{j=1}^{n}\Vert  P_{i,j}  \Vert \right)\nonumber \\
&=& (1+\varepsilon)\prod_{j=1}^{n} \Vert P_{j}  \Vert \nonumber 
 \nonumber 
\end{eqnarray}
as desired.

To prove that $D((X_i)_\fU,k_1,\cdots,k_n) \leq \displaystyle\lim_{i,\fU}(D(X_i,k_1,\cdots,k_n))$ if each $X_i$ has the $1+$ uniform approximation property is not as straightforward. Given $\varepsilon >0$, let $\{P_j\}_{j=1}^n$ be a set of polynomials on $(X_i)_\fU$ with $deg(P_j)=k_j$ such that
$$ D((X_i)_\fU,k_1,\cdots,k_n) \left \Vert \prod_{j=1}^{n} P_j \right \Vert \leq  (1+\varepsilon)\prod_{j=1}^{n} \Vert P_j \Vert . $$

Let $K\subseteq B_{(X_i)_\fU}$ be the finite set $K=\{x_1,\cdots, x_n\}$ where $ x_j$ is such that 
$$|P_j(x_j)| > \Vert P_j\Vert (1- \varepsilon) \mbox{ for }j=1,\cdots, n.$$ 
Being that each $X_i$ has the $1+$ uniform approximation property, then, by Lemma \ref{aprox}, $(X_i)_\fU$ has the metric approximation property. Therefore, exist a finite rank operator $S:(X_i)_\fU\rightarrow (X_i)_\fU$ such that $\Vert S\Vert \leq 1 $ and 
$$\Vert P_j - P_j \circ S \Vert_K< |P_j(x_j)|\varepsilon \mbox{ for }j=1,\cdots, n.$$

Now, define the polynomials $Q_1,\cdots, Q_n$ on $(X_i)_\fU$ as $Q_j=P_j\circ S$. Then 
$$\left\Vert \prod_{j=1}^n Q_j \right\Vert \leq \left\Vert \prod_{j=1}^n P_j \right\Vert $$
$$\Vert Q_j\Vert_K > | P_j(x_j)|-\varepsilon | P_j(x_j)| =| P_j(x_j)| (1-\varepsilon) \geq \Vert P_j \Vert(1-\varepsilon)^2.$$
The construction of this polynomials is a slight variation of Lemma 3.1 from \cite{LR}. We have the next inequality for the product of the polynomials $\{Q_j\}_{j=1}^n$
\begin{eqnarray} D((X_i)_\fU,k_1,\cdots,k_n)\left \Vert \prod_{j=1}^{n} Q_{j} \right \Vert &\leq& D((X_i)_\fU,k_1,\cdots,k_n)\left \Vert \prod_{j=1}^{n} P_{j} \right \Vert \nonumber \\
&\leq&  (1+\varepsilon) \prod_{j=1}^{n}  \left \Vert P_{j} \right \Vert . \label{desq}\end{eqnarray}

Since $S$ is a finite rank operator, the polynomials $\{ Q_j\}_{j=1}^n$ have the advantage that are finite type polynomials. This will allow us to construct polynomials on $(X_i)_\fU$ which are limit of polynomials on the spaces $X_i$. For each $j$ write $Q_j=\sum_{t=1}^{m_j}(\psi_{j,t})^{r_{j,t}}$ with $\psi_{j,t}\in (X_i)_\fU^*$, and consider the spaces $N=\rm{span}  \{x_1,\cdots,x_n\}\subset (X_i)_\fU$ and $M=\rm{span} \{\psi_{j,t} \}\subset (X_i)_\fU^*$. By the local duality of ultraproducts (see Theorem 7.3 from \cite{H}) exist $T:M\rightarrow (X_i^*)_\fU$ an $(1+\varepsilon)-$isomorphism such that
$$JT(\psi)(x)=\psi(x) \mbox{ } \forall x\in N, \mbox{ } \forall \psi\in M$$
where $J:(X_i^*)_\fU\rightarrow (X_i)_\fU^*$ is the canonical embedding. Let $\phi_{j,t}=JT(\psi_{j,t})$ and consider the polynomials $\bar{Q}_1,\cdots, \bar{Q}_n$ on $(X_i)_\fU$ with $\bar{Q}_j=\sum_{t=1}^{m_j}(\phi_{j,t})^{r_{j,t}}$. Clearly $\bar{Q}_j$ is equal to $Q_j$ in $N$ and $K\subseteq N$, therefore we have the following lower bound for the norm of each polynomial
\begin{equation}\Vert \bar{Q}_j \Vert \geq \Vert \bar{Q}_j \Vert_K = \Vert Q_j \Vert_K >\Vert P_j \Vert(1-\varepsilon)^2 \label{desbarq} \end{equation}

Now, let us find an upper bound for the norm of the product $\Vert \prod_{j=1}^n \bar{Q}_j \Vert$. Let $x=(x_i)_\fU$ be any point in $B_{(X_i)_\fU}$. Then, we have 
\begin{eqnarray} \left|\prod_{j=1}^n \bar{Q}_j(x)\right| &=& \left|\prod_{j=1}^n \sum_{t=1}^{m_j}(\phi_{j,t} (x))^{r_{j,t}}\right|=\left|\prod_{j=1}^n \sum_{t=1}^{m_j} (JT\psi_{j,t}(x))^{r_{j,t}} \right| \nonumber \\
&=& \left|\prod_{j=1}^n \sum_{t=1}^{m_j}((JT)^*\hat{x}(\psi_{j,t}))^{r_{j,t}}\right|\nonumber\end{eqnarray}

Since $(JT)^*\hat{x}\in M^*$, $\Vert (JT)^*\hat{x}\Vert =\Vert JT \Vert \Vert x \Vert \leq \Vert J \Vert \Vert T \Vert \Vert x \Vert< 1 + \varepsilon$ and $M^*=\frac{(X_i)_\fU^{**}}{M^{\bot}}$, we can chose $z^{**}\in (X_i)_\fU^{**}$ with $\Vert z^{**} \Vert < \Vert (JT)^*\hat{x}\Vert+\varepsilon < 1+2\varepsilon$, such that $\prod_{j=1}^n \sum_{t=1}^{m_j} ((JT)^*\hat{x}(\psi_{j,t}))^{r_{j,t}}= \prod_{j=1}^n \sum_{t=1}^{m_j} (z^{**}(\psi_{j,t}))^{r_{j,t}}$. By Goldstine's Theorem exist a net $\{z_\alpha\} \subseteq (X_i)_\fU$ $w^*-$convergent to $z$ in $(X_i)_\fU^{**}$ with $\Vert z_\alpha \Vert = \Vert z^{**}\Vert$. In particular, $ \psi_{j,t}(z_\alpha)$ converges to $z^{**}(\psi_{j,t})$. If we call $\kkk = \sum k_j$, since $\Vert z_\alpha \Vert< (1+2\varepsilon)$, by Lemma \ref{normas}, we have
\begin{equation} \left \Vert \prod_{j=1}^{n} Q_j \right \Vert (1+2\varepsilon)^\kkk \geq \left|\prod_{j=1}^n Q_j(z_\alpha)\right| = \left|\prod_{j=1}^n \sum_{t=1}^{m_j} ((\psi_{j,t})(z_\alpha))^{r_{j,t}}\right| .      \label{usecomplex} 
\end{equation}
Combining this with the fact that
\begin{eqnarray} \left|\prod_{j=1}^{n} \sum_{t=1}^{m_j} ((\psi_{j,t})(z_\alpha))^{r_{j,t}}\right| &\longrightarrow&  \left|\prod_{j=1}^{n} \sum_{t=1}^{m_j} (z^{**}(\psi_{j,t}))^{r_{j,t}}\right|\nonumber\\
 &=& \left|\prod_{j=1}^{n} \sum_{t=1}^{m_j} ((JT)^*\hat{x}(\psi_{j,t}))^{r_{j,t}}\right| = \left|\prod_{j=1}^{n} \bar{Q}_j(x)\right|\nonumber
\end{eqnarray}
we conclude that $\left \Vert \prod_{j=1}^{n} Q_j \right \Vert (1+2\varepsilon)^\kkk \geq |\prod_{j=1}^{n} \bar{Q}_j(x)|$.

Since the choice of $x$ was arbitrary we arrive to the next inequality
\begin{eqnarray}
 D((X_i)_\fU,k_1,\cdots,k_n)\left \Vert \prod_{j=1}^{n} \bar{Q}_j \right \Vert &\leq& (1+2\varepsilon)^\kkk D((X_i)_\fU,k_1,\cdots,k_n)   \left \Vert \prod_{j=1}^{n} Q_j \right \Vert   \nonumber \\
&\leq&  (1+2\varepsilon)^\kkk (1+\varepsilon) \prod_{j=1}^{n}  \left \Vert P_{j} \right \Vert \label{desbarq2} \\
&<& (1+2\varepsilon)^\kkk (1+\varepsilon) \frac{\prod_{j=1}^{n} \Vert \bar{Q}_j \Vert }{(1-\varepsilon)^{2n}} .\label{desbarq3}  \
\end{eqnarray}
In (\ref{desbarq2}) and (\ref{desbarq3}) we use (\ref{desq}) and (\ref{desbarq}) respectively. The polynomials $\bar{Q}_j$ are not only of finite type, these polynomials are also generated by elements of $(X_i^*)_\fU$. This will allow us to write them as limits of polynomials in $X_i$. For any $i$, consider the polynomials $\bar{Q}_{i,1},\cdots,\bar{Q}_{i,n}$ on $X_i$ defined by $\bar{Q}_{i,j}= \displaystyle\sum_{t=1}^{m_j} (\phi_{i,j,t})^{r_{j,t}}$, where the functionals $\phi_{i,j,t}\in X_i^*$ are such that $(\phi_{i,j,t})_\fU=\phi_{j,t}$. Then $\bar{Q}_j(x)=\displaystyle\lim_{i,\fU} \bar{Q}_{i,j}(x)$ $\forall x \in (X_i)_\fU$ and, by Proposition \ref{pollim}, $\Vert \bar{Q}_j \Vert = \displaystyle\lim_{i,\fU} \Vert \bar{Q}_{i,j} \Vert$. Therefore 
\begin{eqnarray}
D((X_i)_\fU,k_1,\cdots,k_n) \displaystyle\lim_{i,\fU} \left \Vert \prod_{j=1}^{n} \bar{Q}_{i,j} \right \Vert &=& D((X_i)_\fU,k_1,\cdots,k_n) \left \Vert \prod_{j=1}^{n} \bar{Q}_{j} \right \Vert \nonumber \\
&<& \frac{(1+\varepsilon)(1+2\varepsilon)^\kkk}{(1-\varepsilon)^{2n}} \prod_{j=1}^{n} \Vert \bar{Q}_{j}  \Vert \nonumber \\
&=& \frac{(1+\varepsilon)(1+2\varepsilon)^\kkk	}{(1-\varepsilon)^{2n}} \prod_{j=1}^{n} \displaystyle\lim_{i,\fU} \Vert \bar{Q}_{i,j}  \Vert .
 \nonumber 
\end{eqnarray}

To simplify the notation let us call $\lambda = \frac{(1+\varepsilon)(1+2\varepsilon)^\kkk}{(1-\varepsilon)^{2n}} $. Take $L>0$ such that 

\begin{equation}D((X_i)_\fU,k_1,\cdots,k_n) \displaystyle\lim_{i,\fU} \left \Vert \prod_{j=1}^{n} \bar{Q}_{i,j} \right \Vert < L < \lambda \prod_{j=1}^{n} \displaystyle\lim_{i,\fU} \Vert \bar{Q}_{i,j}  \Vert . \nonumber \end{equation}

Since $(-\infty, \frac{L}{D((X_i)_\fU,k_1,\cdots,k_n)})$ and $(\frac{L}{\lambda},+\infty)$ are neighborhoods of $\displaystyle\lim_{i,\fU} \left \Vert \prod_{j=1}^{n} \bar{Q}_{i,j} \right \Vert$ and $\prod_{j=1}^{n} \displaystyle\lim_{i,\fU} \Vert \bar{Q}_{i,j}  \Vert$ respectively, and $\prod_{j=1}^{n} \displaystyle\lim_{i,\fU} \Vert \bar{Q}_{i,j}  \Vert= \displaystyle\lim_{i,\fU} \prod_{j=1}^{n} \Vert \bar{Q}_{i,j}  \Vert$, by definition of $\displaystyle\lim_{i,\fU}$, the sets
$$A=\{i_0: D((X_i)_\fU,k_1,\cdots,k_n) \left \Vert \prod_{j=1}^{n} \bar{Q}_{i_0,j} \right \Vert <L\} \mbox{ and }B=\{i_0: \lambda \prod_{j=1}^{n}  \Vert \bar{Q}_{i_0,j}  \Vert > L \}$$
are elements of $\fU$. Since $\fU$ is closed by finite intersections $A\cap B\in \fU$. If we take any element $i_0 \in A\cap B$ then, for any $\delta >0$, we have that
\begin{equation}D((X_i)_\fU,k_1,\cdots,k_n) \left \Vert \prod_{j=1}^{n} \bar{Q}_{i_0,j} \right \Vert \frac{1}{\lambda}\leq \frac{L}{\lambda} \leq \prod_{j=1}^{n}  \Vert \bar{Q}_{i_0,j}  \Vert < (1+ \delta)\prod_{j=	1}^{n}  \Vert \bar{Q}_{i_0,j} \Vert \nonumber \end{equation}
Then, since $\delta$ is arbitrary, the constant $D((X_i)_\fU,k_1,\cdots,k_n)\frac{1}{\lambda}$ satisfy (\ref{condition}) for the space $X_{i_0}$ and therefore, by Lemma \ref{alternat},
\begin{equation} \frac{1}{\lambda}D((X_i)_\fU,k_1,\cdots,k_n) \leq  D(X_{i_0},k_1,\cdots,k_n). 	\nonumber \end{equation}

This holds true for any $i_0$ in $A\cap B$. Since $A\cap B \in \fU$, by Lemma \ref{lemlimit}, $\frac{1}{\lambda}D((X_i)_\fU,k_1,\cdots,k_n)\leq \displaystyle\lim_{i,\fU}  D(X_i,k_1,\cdots,k_n) $. Using that $\lambda \rightarrow 1$ when $\varepsilon \rightarrow 0$ we conclude that $D((X_i)_\fU,k_1,\cdots,k_n)\leq \displaystyle\lim_{i,\fU}  D(X_i,k_1,\cdots,k_n).$
\end{proof}

Similar to Corollary 3.3 from \cite{LR}, a straightforward corollary of our main result is that for any complex Banach space $X$ with $1+$ uniform approximation property $C(X,k_1,\cdots,k_n)=C(X^{**},k_1,\cdots,k_n)$ and $D(X,k_1,\cdots,k_n)=D(X^{**},k_1,\cdots,k_n)$ .  Using that $X^{**}$ is $1-$complemented in some adequate ultrafilter $(X)_{\fU}$ the result is rather obvious. For a construction of the adequate ultrafilter see \cite{LR}. 

But following the previous proof, and using the principle of local reflexivity applied to $X^*$ instead of the local duality of ultraproducts, we can prove the next stronger result.

\begin{thm}  Let $X$ be a complex Banach space. Then

\begin{enumerate}
\item[(a)] $C(X^{**},k_1,\cdots,k_n)\geq C(X,k_1,\cdots,k_n).$

\item[(b)] $D(X^{**},k_1,\cdots,k_n \geq D(X,k_1,\cdots,k_n)).$
\end{enumerate}
Moreover, if $X^{**}$ has the metric approximation property, equality holds in both cases.
\end{thm}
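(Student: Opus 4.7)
The plan is to transpose the proof of the Main Theorem, replacing $(X_i)_\fU$ by $X^{**}$ throughout, and---for the harder ``equality'' direction---replacing the local duality of ultraproducts by the principle of local reflexivity applied to $X^*$.

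For the inequalities $C(X^{**})\geq C(X)$ and $D(X^{**})\geq D(X)$ I would realize $X^{**}$ as a $1$-complemented isometric subspace of a suitable ultrapower $(X)_\fU$, via the construction referenced from \cite{LR}. Given $\varepsilon>0$, Lemma \ref{alternat} furnishes polynomials $P_j$ on $X$ of degree $k_j$ with $D(X)\Vert\prod_{j=1}^n P_j\Vert\leq(1+\varepsilon)\prod_{j=1}^n\Vert P_j\Vert$. Lift them diagonally via $\tilde{P}_j((x_i)_\fU)=(P_j(x_i))_\fU$ to polynomials on $(X)_\fU$; Proposition \ref{pollim} gives $\Vert\tilde{P}_j\Vert=\Vert P_j\Vert$ and $\Vert\prod_{j=1}^n\tilde{P}_j\Vert=\Vert\prod_{j=1}^n P_j\Vert$. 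Now restrict $\tilde{P}_j$ to the isometric copy of $X^{**}\subset(X)_\fU$: since $X\subset X^{**}\subset(X)_\fU$ is a chain of isometric inclusions, $\Vert P_j\Vert_X\leq\Vert\tilde{P}_j|_{X^{**}}\Vert_{X^{**}}\leq\Vert\tilde{P}_j\Vert_{(X)_\fU}=\Vert P_j\Vert_X$ and similarly for the product, so all three norms agree; Lemma \ref{alternat} in $X^{**}$ then delivers the desired inequality.

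For $D(X^{**})\leq D(X)$ under the MAP assumption on $X^{**}$, I reproduce the Main Theorem's proof almost verbatim inside $X^{**}$. Take polynomials $P_j$ on $X^{**}$ with $\Vert P_j\Vert=1$ and $D(X^{**})\Vert\prod_j P_j\Vert\leq 1+\varepsilon$, pick $K=\{x_1^{**},\ldots,x_n^{**}\}\subset B_{X^{**}}$ with $|P_j(x_j^{**})|>1-\varepsilon$, use MAP on $X^{**}$ to produce a finite rank operator $S:X^{**}\to X^{**}$ with $\Vert S\Vert\leq 1$ that nearly fixes each $P_j$ on $K$, and write the finite-type polynomials $Q_j=P_j\circ S=\sum_{t=1}^{m_j}\psi_{j,t}^{r_{j,t}}$ with $\psi_{j,t}\in X^{***}$. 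The critical substitution happens here: the principle of local reflexivity applied to $X^*$, with $E=\mathrm{span}\{\psi_{j,t}\}\subset(X^*)^{**}$ and $F=\mathrm{span}\{x_1^{**},\ldots,x_n^{**}\}\subset(X^*)^*$, produces a $(1+\varepsilon)$-isomorphism $T:E\to X^*$ such that $x^{**}(T\psi)=\psi(x^{**})$ for $x^{**}\in F$, $\psi\in E$. Setting $\phi_{j,t}=T\psi_{j,t}\in X^*$, define the transferred polynomials $\bar{Q}_j(x^{**})=\sum_{t=1}^{m_j}(x^{**}(\phi_{j,t}))^{r_{j,t}}$ on $X^{**}$ and $R_j(x)=\sum_{t=1}^{m_j}(\phi_{j,t}(x))^{r_{j,t}}$ on $X$.

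From here the argument closes as in the Main Theorem. Since $\bar{Q}_j=Q_j$ on $F\supset K$, one has $\Vert\bar{Q}_j\Vert_{X^{**}}>(1-\varepsilon)^2$. To bound $\Vert\prod_j\bar{Q}_j\Vert_{X^{**}}$ I lift $T^*x^{**}\in E^*\cong X^{****}/E^\perp$ to some $y^{****}\in X^{****}$ with $\Vert y^{****}\Vert<1+2\varepsilon$, approximate $y^{****}$ $w^*$-ly by $z_\alpha\in X^{**}$ of norm $<1+2\varepsilon$ via Goldstine applied to $X^{**}\subset X^{****}$, and invoke Lemma \ref{normas} exactly as in (\ref{usecomplex}) to conclude $\Vert\prod_j\bar{Q}_j\Vert_{X^{**}}\leq(1+2\varepsilon)^{\kkk}\Vert\prod_j Q_j\Vert_{X^{**}}$. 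Because each $\phi_{j,t}$ lies in $X^*$, the polynomials $\bar{Q}_j$ and their product are $w^*$-continuous on $X^{**}$, so the Goldstine density of $X$ in $X^{**}$ forces $\Vert\bar{Q}_j\Vert_{X^{**}}=\Vert R_j\Vert_X$ and $\Vert\prod_j\bar{Q}_j\Vert_{X^{**}}=\Vert\prod_j R_j\Vert_X$---the bidual analog of the $\displaystyle\lim_{i,\fU}$ step closing the Main Theorem. Combining these identities with the defining inequality for $D(X)$ on $\{R_j\}$ and letting $\varepsilon\to 0$ gives $D(X^{**})\leq D(X)$. The main obstacle, as in the Main Theorem, is this $w^*$-approximation bookkeeping, now performed in $X^{****}$ with the identification $E^*\cong X^{****}/E^\perp$; the principle of local reflexivity on $X^*$ is exactly what makes it go through, playing the same role that local duality of ultraproducts did in the Main Theorem.
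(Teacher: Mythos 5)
Your proposal is correct and follows essentially the same route as the paper: the easy inequality via the isometric $1$-complemented embedding of $X^{**}$ into an ultrapower $(X)_\fU$ together with Proposition \ref{pollim}, and the reverse inequality by rerunning the Main Theorem's argument inside $X^{**}$ with the principle of local reflexivity applied to $X^*$ in place of the local duality of ultraproducts, closing with the $w^*$-continuity of the finite-type polynomials $\bar{Q}_j$ (generated by elements of $X^*$) and Goldstine to pass from $X^{**}$ back down to $X$. The bookkeeping in $X^{****}/E^{\perp}$ and the use of Lemma \ref{normas} match the paper's (\ref{usecomplex}) step exactly.
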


\begin{proof} The inequality $D(X^{**},k_1,\cdots,k_n) \geq D(X,k_1,\cdots,k_n)$ is a corollary of Theorem \ref{main thm} (using the adequate ultrafilter mentioned above).

Let us prove that if $X^{**}$ has the metric approximation property then $D((X^{**},k_1,\cdots,k_n)\geq D(X,k_1,\cdots,k_n)$. Given $\varepsilon >0$, let $\{P_j\}_{j=1}^n$ be a set of polynomials on $X^{**}$ with $deg(P_j)=k_j$ such that
\begin{equation} D(X^{**},k_1,\cdots,k_n)\left \Vert \prod_{j=1}^{n} P_{j} \right \Vert \leq (1+\varepsilon)\prod_{j=1}^{n} \left \Vert P_{j} \right \Vert .\nonumber \end{equation}

Analogous to the proof of Theorem \ref{main thm}, since $X^{**}$ has the metric approximation, we can construct finite type polynomials $Q_1,\cdots,Q_n$ on $X^{**}$ with $deg(Q_j)=k_j$, $\Vert Q_j \Vert_K \geq \Vert P_j \Vert (1-\varepsilon)^2$ for some finite set $K\subseteq B_{X^{**}}$ and that
\begin{equation}D(X^{**},k_1,\cdots,k_n)\left \Vert \prod_{j=1}^{n} Q_{j} \right \Vert < (1+\varepsilon)\prod_{j=1}^{n} \left \Vert P_{j} \right  \Vert . \nonumber \end{equation}

Suppose that $Q_j=\sum_{t=1}^{m_j}(\psi_{j,t})^{r_{j,t}}$ and consider the spaces $N=\rm{span} \{K\}$ and $M=\rm{span} \{\psi_{j,t} \}$. By the principle of local reflexivity (see \cite{D}), applied to $X^*$ (thinking $N$ as a subspaces of $(X^*)^*$ and $M$ as a subspaces of $(X^*)^{**}$), there is an  $(1+\varepsilon)-$isomorphism $T:M\rightarrow X^*$ such that
$$JT(\psi)(x)=\psi(x) \mbox{ } \forall x\in N, \mbox{ } \forall \psi\in M\cap X^*=M,$$
where $J:X^*\rightarrow X^{***}$ is the canonical embedding. 

Let $\phi_{j,t}=JT(\psi_{j,t})$ and consider the polynomials $\bar{Q}_1,\cdots, \bar{Q}_n$ on $X^{**}$ defined by $\bar{Q}_j=\sum_{t=1}^{m_j}(\phi_{j,t})^{r_{j,t}}$. Following the proof of the Main Theorem, one arrives to the inequation
\begin{equation}D(X^{**},k_1,\cdots,k_n)\left \Vert \prod_{j=1}^{n} \bar{Q_j} \right \Vert < (1+ \delta) \frac{(1+\varepsilon)(1+2\varepsilon)^\kkk}{(1-\varepsilon)^{2n}} \prod_{j=1}^{n} \Vert \bar{Q_j}  \Vert \nonumber \end{equation}
for every $\delta >0$. Since each $\bar{Q}_j$ is generated by elements of $J(X^*)$, by Goldstine's Theorem, the restriction of $\bar{Q}_j$ to $X$ has the same norm and the same is true for $\prod_{j=1}^{n} \bar{Q_j}$. Then
\begin{equation}D(X^{**},k_1,\cdots,k_n)\left \Vert \prod_{j=1}^{n} \left.\bar{Q_j}\right|_X \right \Vert < (1+ \delta) \frac{(1+\varepsilon)(1+2\varepsilon)^\kkk}{(1-\varepsilon)^{2n}} \prod_{j=1}^{n} \Vert \left.\bar{Q_j}\right|_X  \Vert \nonumber \end{equation}
By Lemma \ref{alternat} we conclude that 
$$\frac{(1-\varepsilon)^{2n}}{(1+\varepsilon)(1+2\varepsilon)^\kkk}D(X^{**},k_1,\cdots,k_n)\leq D(X,k_1,\cdots,k_n).$$ Given that the choice of $\varepsilon$ is arbitrary and that $\frac{(1-\varepsilon)^{2n}}{(1+\varepsilon)(1+2\varepsilon)^\kkk} $ tends to $1$ when $\varepsilon$ tends to $0$ we conclude that $D(X^{**},k_1,\cdots,k_n)\leq D(X,k_1,\cdots,k_n)$.

\end{proof}

Note that in the proof of the Main Theorem the only parts where we need the spaces to be complex Banach spaces are at the beginning, where we use Proposition \ref{pollim}, and in the inequality (\ref{usecomplex}), where we use Lemma \ref{normas}. But both results holds true for homogeneous polynomials on a real Banach space. Then, copying the proof of the Main Theorem we obtain the following result for real spaces.

\begin{thm} If $\fU$ is an ultrafilter on a family $I$ and $(X_i)_\fU$ is an ultraproduct of real Banach spaces then
 $$C((X_i)_\fU,k_1,\cdots,k_n) \geq \displaystyle\lim_{i,\fU}(C(X_i,k_1,\cdots,k_n)).$$

If in addition each $X_i$ has the $1+$ uniform approximation property, the equality holds.
\end{thm}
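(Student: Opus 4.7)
The plan is to mirror the proof of the Main Theorem verbatim, but to restrict attention to the homogeneous branch at every step; the hint in the paragraph preceding the statement already pinpoints that the only places where complex scalars intervened were (i) the non-homogeneous part of Proposition \ref{pollim} and (ii) the invocation of Lemma \ref{normas} in the estimate (\ref{usecomplex}). Both of these vanish once we agree to work only with $k$-homogeneous polynomials, for which the elementary bound $|P(x)|\leq \Vert P\Vert\,\Vert x\Vert^k$ is valid over $\zR$, and for which the homogeneous case of Proposition \ref{pollim} was proved without using the complex structure.

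For the easy direction $C((X_i)_\fU,k_1,\dots,k_n)\geq \lim_{i,\fU} C(X_i,k_1,\dots,k_n)$, I would use the homogeneous version of Lemma \ref{alternat} (together with Remark \ref{rmkalternat}) to pick, for each $i$, homogeneous polynomials $P_{i,j}$ on $X_i$ of degree exactly $k_j$ with $\Vert P_{i,j}\Vert=1$ and
$$C(X_i,k_1,\dots,k_n)\Bigl\Vert\prod_{j=1}^n P_{i,j}\Bigr\Vert\leq (1+\varepsilon)\prod_{j=1}^n \Vert P_{i,j}\Vert.$$
Proposition \ref{pollim}, in its homogeneous form (valid over $\zR$), then gives $k_j$-homogeneous polynomials $P_j((x_i)_\fU)=(P_{i,j}(x_i))_\fU$ on $(X_i)_\fU$ with $\Vert P_j\Vert=\lim_{i,\fU}\Vert P_{i,j}\Vert=1$, and taking the limit of the previous inequality along $\fU$ together with Lemma \ref{alternat} yields the claim.

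For the reverse inequality under the $1+$ uniform approximation hypothesis, I would copy the second half of the proof of the Main Theorem step for step. Starting from near-extremal homogeneous polynomials $\{P_j\}_{j=1}^n$ of degrees $k_j$ on $(X_i)_\fU$, pick the finite set $K$ where each $P_j$ almost attains its norm; invoke Lemma \ref{aprox} to get a norm-one finite rank operator $S$ on $(X_i)_\fU$ such that $Q_j:=P_j\circ S$ remain homogeneous of degree $k_j$ and satisfy $\Vert Q_j\Vert_K\geq \Vert P_j\Vert(1-\varepsilon)^2$; write $Q_j=\sum_t (\psi_{j,t})^{r_{j,t}}$; apply the local duality of ultraproducts (Theorem 7.3 of \cite{H}), which works for real spaces, to produce $\phi_{j,t}=JT(\psi_{j,t})\in (X_i)_\fU^*$ and define $\bar Q_j=\sum_t (\phi_{j,t})^{r_{j,t}}$; finally apply Goldstine to obtain a net $z_\alpha$ with $\Vert z_\alpha\Vert<1+2\varepsilon$ approximating the relevant $z^{**}$. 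At the single point where the complex argument used Lemma \ref{normas}, I would instead use the direct homogeneous bound $|Q_j(z_\alpha)|\leq \Vert Q_j\Vert\,\Vert z_\alpha\Vert^{k_j}\leq \Vert Q_j\Vert(1+2\varepsilon)^{k_j}$, which gives exactly the same product inequality
$$\Bigl\Vert\prod_{j=1}^n Q_j\Bigr\Vert (1+2\varepsilon)^{\kkk}\geq \Bigl|\prod_{j=1}^n Q_j(z_\alpha)\Bigr|$$
with $\kkk=\sum k_j$. The rest of the argument (passing to polynomials $\bar Q_{i,j}$ on each $X_i$ via $(\phi_{i,j,t})_\fU=\phi_{j,t}$, applying Proposition \ref{pollim} once more, and using Lemma \ref{lemlimit} to extract an index $i_0\in A\cap B$ giving the sought bound for $C(X_{i_0},k_1,\dots,k_n)$) is purely formal and does not depend on the scalar field. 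Letting $\varepsilon\to 0$ produces the reverse inequality.

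The main (and essentially only) obstacle is the bookkeeping to verify that none of the subsidiary tools silently needs complex scalars: specifically I have to check that Heinrich's local duality theorem and Lemma \ref{aprox} are stated or go through identically in the real setting, and that every time the complex proof invoked Lemma \ref{normas} with a non-homogeneous polynomial, the polynomial in question (the homogeneous $Q_j$, or $P_j$) is actually homogeneous in our restricted setting so that the trivial homogeneous estimate suffices. Once these sanity checks are performed, the proof is a literal transcription of the homogeneous part (a) of the Main Theorem.
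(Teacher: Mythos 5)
Your proposal is correct and follows essentially the same route as the paper, which likewise proves this theorem by observing that the only complex-specific ingredients in the Main Theorem's proof are the non-homogeneous case of Proposition \ref{pollim} and the use of Lemma \ref{normas} in (\ref{usecomplex}), both of which reduce to the elementary bound $|P(x)|\leq\Vert P\Vert\,\Vert x\Vert^{k}$ valid for homogeneous polynomials over $\zR$. Your additional sanity checks (that Lemma \ref{aprox} and Heinrich's local duality hold for real scalars) are exactly the bookkeeping the paper leaves implicit.
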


Also we can get a similar result for the bidual of a real space.

\begin{thm}  Let $X$ be a real Banach space. Then

\begin{enumerate}
\item[(a)] $C(X^{**},k_1,\cdots,k_n)\geq C(X,k_1,\cdots,k_n).$

\item[(b)] $D(X^{**},k_1,\cdots,k_n) \geq D(X,k_1,\cdots,k_n).$
\end{enumerate}
If $X^{**}$ has the metric approximation property, equality holds in $(a)$.
\end{thm}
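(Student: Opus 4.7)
The plan is to follow the proof of the complex bidual theorem above, adapting it to the real case using the same considerations that distinguish the real and complex versions of the Main Theorem. The argument splits into the two inequalities of the statement, which I treat separately. For the easy direction in both (a) and (b), I would use an adequate ultrafilter $\fU$ (as constructed in \cite{LR}) for which $X^{**}$ embeds isometrically as a $1$-complemented subspace of $(X)_\fU$ via an embedding $i$ whose composition with the canonical $J_X: X \to X^{**}$ is the diagonal embedding $\Delta: x \mapsto (x)_\fU$. Given near-optimal polynomials $P_1, \ldots, P_n$ on $X$, I would decompose each $P_j$ into its homogeneous summands and lift each summand via the homogeneous statement of Proposition \ref{pollim} (which is valid over $\zR$) applied to the corresponding constant family; reassembling the pieces gives a polynomial $\hat P_j$ on $(X)_\fU$. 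Using a continuity argument for $R \mapsto \sup_{\Vert x\Vert \le R} |P_j(x)|$ at $R = 1$, one checks $\Vert \hat P_j\Vert = \Vert P_j\Vert$; the same argument applied to the polynomial $\prod P_j$ (noting $\widehat{\prod P_j} = \prod \hat P_j$) yields $\Vert \prod \hat P_j\Vert = \Vert \prod P_j\Vert$. Restricting via $\bar P_j = \hat P_j \circ i$, the identity $i \circ J_X = \Delta$ gives $\bar P_j|_X = P_j$, so that individual and product norms are preserved, and the ratio $\prod \Vert P_j\Vert/\Vert \prod P_j\Vert$ transfers unchanged from $X$ to $X^{**}$, yielding (a) and (b).

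For the equality in (a), assuming $X^{**}$ has the metric approximation property, I would copy the proof of the complex bidual theorem almost verbatim, restricted to the homogeneous setting. Starting from near-optimal homogeneous $P_j$ on $X^{**}$, use the MAP of $X^{**}$ to pass to finite-type $Q_j = \sum_{t=1}^{m_j} (\psi_{j,t})^{r_{j,t}}$ nearly realising the same ratio on a finite set $K \subset B_{X^{**}}$. Apply the principle of local reflexivity to $X^*$---viewing $N = \mathrm{span}(K) \subset (X^*)^*$ and $M = \mathrm{span}\{\psi_{j,t}\} \subset (X^*)^{**}$---to obtain a $(1+\varepsilon)$-isomorphism $T: M \to X^*$ compatible with the pairing on $N$, and set $\bar Q_j = \sum_{t=1}^{m_j} (JT\psi_{j,t})^{r_{j,t}}$. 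The Goldstine-net argument from the Main Theorem then bounds $\Vert \prod \bar Q_j\Vert \le (1+2\varepsilon)^{\kkk}\Vert \prod Q_j\Vert$, using only the homogeneous estimate $|P(x)| \le \Vert x\Vert^k \Vert P\Vert$, which is valid over $\zR$. Since each $\bar Q_j$ is generated by elements of $J(X^*)$, Goldstine's theorem transfers the individual and product norms back down to $X$ with no loss, and Lemma \ref{alternat} closes the argument.

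The main obstacle, and the reason equality is claimed only in (a), is the step labelled (\ref{usecomplex}) in the proof of the Main Theorem: the bound $|P(z_\alpha)| \le (1+2\varepsilon)^{k}\Vert P\Vert$ for $\Vert z_\alpha\Vert < 1 + 2\varepsilon$ is exactly Lemma \ref{normas}, whose proof relies on the complex maximum modulus principle. For a non-homogeneous polynomial on a real Banach space no such universal estimate holds, so the Goldstine step cannot be closed for arbitrary polynomials, and equality in (b) cannot be obtained by this method.
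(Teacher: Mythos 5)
Your proposal is correct, and for part (b) it takes a genuinely different route from the paper. For (a) you do what the paper does: the inequality comes from the ultrapower picture, and the equality under the metric approximation property is the complex bidual argument restricted to homogeneous polynomials, where Lemma \ref{normas} reduces to the trivial estimate $|P(x)|\leq \Vert x\Vert^k\Vert P\Vert$ and the local reflexivity/Goldstine steps go through over $\zR$; your diagnosis of why equality in (b) is out of reach by this method (the non-homogeneous Lemma \ref{normas} rests on the maximum modulus principle) matches the paper's own remark. For (b), however, the paper does not use the ultrapower at all: it takes near-optimal $Q_j$ on $X$, sets $P_j=AB(Q_j)$ (the Aron--Berner extension), and uses multiplicativity of $AB$ on products together with the Davie--Gamelin theorem that $AB$ preserves norms, so the ratio $\prod\Vert Q_j\Vert/\Vert\prod Q_j\Vert$ transfers to $X^{**}$ in one line. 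Your route instead lifts the $P_j$ summand-by-summand to the Lindstr\"om--Ryan ultrapower $(X)_\fU$ (correctly routing around the fact that the non-homogeneous part of Proposition \ref{pollim} is complex-only, via the right-continuity of $R\mapsto\sup_{\Vert x\Vert\leq R}|P_j(x)|$ at $R=1$ for a constant family) and then restricts along the isometric embedding $i:X^{**}\to(X)_\fU$ with $i\circ J_X=\Delta$, which pins both $\Vert\bar P_j\Vert=\Vert P_j\Vert$ and $\Vert\prod\bar P_j\Vert=\Vert\prod P_j\Vert$ between the two trivial inequalities. This is sound and has the merit of being self-contained modulo the construction in \cite{LR}, avoiding the citation of \cite{DG}; the paper's version is shorter and makes the extension canonical. (In substance the two constructions produce essentially the same extension, since the restriction of the canonical ultrapower extension to $X^{**}$ realizes the Aron--Berner extension.)

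One small point to watch in your equality argument for (a): over $\zR$ a finite-type homogeneous polynomial is a linear combination, not necessarily a sum, of powers of functionals, so carry the real coefficients $c_{j,t}$ along in $Q_j=\sum_t c_{j,t}(\psi_{j,t})^{r_{j,t}}$; this changes nothing in the local reflexivity and Goldstine steps.
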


\begin{proof} The proof of item $(a)$ is the same that in the complex case, so we limit to prove $D(X^{**},k_1,\cdots,k_n) \geq D(X,k_1,\cdots,k_n))$. To do this we will show that given an arbitrary $\varepsilon >0$, there is a set of polynomials $\{P_{j}\}_{j=1}^n$ on $X^{**}$ with $deg(P_{j})\leq k_j$  such that
$$ D(X,k_1,\cdots,k_n) \left \Vert \prod_{j=1}^{n} P_j \right \Vert \leq (1+\varepsilon) \prod_{j=1}^{n} \left \Vert P_j \right \Vert .$$

Take $\{Q_{j}\}_{j=1}^n$ a set of polynomials  on $X$ with $deg(Q_j)=k_j$ such that
$$ D(X,k_1,\cdots,k_n) \left \Vert \prod_{j=1}^{n} Q_{j} \right \Vert \leq (1 +\varepsilon)\prod_{j=1}^{n} \left \Vert Q_{j} \right \Vert.$$

Consider now the polynomials $P_j=AB(Q_j)$, where $AB(Q_j)$ is the Aron Berner extension of $Q_j$ (for details on this extension see \cite{AB} or \cite{Z}). Since $AB\left( \prod_{j=1}^n P_j \right)=\prod_{j=1}^n AB(P_j)$, using that the Aror Berner extension preserves norm (see \cite{DG}) we have

\begin{eqnarray} D(X,k_1,\cdots,k_n) \left \Vert \prod_{j=1}^{n} P_{j} \right \Vert &=& D(X,k_1,\cdots,k_n) \left \Vert \prod_{j=1}^{n} Q_{j}  \right \Vert\nonumber \\
&\leq& (1 +\varepsilon)\prod_{j=1}^{n} \left\Vert Q_{j} \right\Vert \nonumber \\
&=& (1 +\varepsilon)\prod_{j=1}^{n} \left \Vert P_{j} \right  \Vert \nonumber 
\end{eqnarray}
as desired.

\end{proof}

As a final remark, we mention two types of spaces for which the results on this section can be applied.

Corollary 9.2 from \cite{H} states that any Orlicz space $L_\Phi(\mu)$, with $\mu$ a finite measure and $\Phi$ an Orlicz function with regular variation at $\infty$, has the $1+$ uniform projection property, which is stronger than  the $1+$ uniform approximation property.

In \cite{PeR} Section two, A. Pe\l czy\'nski and H. Rosenthal proved that any $\cL_{p,\lambda}-$space ($1\leq \lambda < \infty$) has the $1+\varepsilon-$uniform projection property for every $\varepsilon>0$ (which is stronger than the $1+\varepsilon-$uniform approximation property), therefore, any $\cL_{p,\lambda}-$space has the $1+$ uniform approximation property. 

\section*{Acknowledgment}
I would like to thank Professor Daniel Carando for both encouraging me to write this article, and for his comments and remarks which improved its presentation and content.


\begin{thebibliography}{HD}

%% Use the widest label as the parameter.

%% In IMPAN journals, only the title is italicized; boldface is not used.
%% The issue number is only given when the issues are paginated separately.

%%%%%%% To ease editing, use normal size:

\normalsize
\baselineskip=17pt

%%%%%%%%%%%%%%%

\bibitem[A]{A} R. M. J. Arias-de-Reyna.
\emph{Gaussian variables, polynomials and permanents}.
Linear Algebra Appl. 285 (1998), 107--114.

\bibitem[AB]{AB} R. M. Aron and P. D. Berner.
\emph{A Hahn-Banach extension theorem for analytic mapping}.
Bull. Soc. Math. France 106 (1978), 3--24.

\bibitem[BST]{BST} C. Ben\'{\i}tez, Y. Sarantopoulos and A. Tonge.
\emph{Lower bounds for norms of products of polynomials}.
Math. Proc. Cambridge Philos. Soc. 124 (1998), 395--408.



\bibitem[CPR]{CPR} D. Carando, D. Pinasco y J.T. Rodr\'{\i}guez.
\emph{Lower bounds for norms of products of polynomials on $L_p$ spaces}.
Studia Math. 214 (2013), 157--166.

\bibitem[DG]{DG} A. M. Davie and T. W. Gamelin.
\emph{A theorem on polynomial-star approximation}.
Proc. Amer. Math. Soc. 106 (1989) 351--356.

\bibitem[D]{D} D. W. Dean.
\emph{The equation $L(E,X^{**})=L(E,X)^{**}$ and the principle of local reflexivity}.
Proceedings of the American Mathematical Society. 40 (1973), 146-148.

\bibitem[H]{H} S. Heinrich.
\emph{Ultraproducts in Banach space theory}.
J. Reine Angew. Math. 313 (1980), 72--104.

\bibitem[LR]{LR} M. Lindstr\"{o}m and R. A. Ryan.
\emph{Applications of ultraproducts to infinite dimensional holomorphy}.
Math. Scand. 71 (1992), 229--242.

\bibitem[PeR]{PeR} A. Pe\l czy\'nski and H. Rosenthal.
\emph{Localization techniques in $L_p$ spaces}.
Studia Math. 52 (1975), 265--289.

\bibitem[P]{P} D. Pinasco.
\emph{Lower bounds for norms of products of polynomials via Bombieri inequality}.
Trans. Amer. Math. Soc. 364 (2012), 3993--4010.

\bibitem[Z]{Z} I. Zalduendo.
\emph{Extending polynomials on Banach Spaces - A survey}.
 Rev. Un. Mat. Argentina 46 (2005), 45--72.

\end{thebibliography}
\end{document}